\newtheorem{theorem}{Theorem}
\newtheorem{lemma}[theorem]{Lemma}
\newtheorem{corollary}[theorem]{Corollary}
\newtheorem{conjecture}[theorem]{Conjecture}
\begin{document}

\begin{frontmatter}

\title{The Last Temptation of William T. Tutte}

\author[SFU]{Bojan Mohar\fnref{mohar,mohar2}}
\ead{mohar@sfu.ca}

\author[SFU]{Nathan Singer}
\ead{nsinger@sfu.ca}

\address[SFU]{Simon Fraser University\\
 Department of Mathematics\\
 8888 University Drive\\
 Burnaby, BC, Canada\\}

\fntext[mohar]{Supported in part by an NSERC Discovery Grant R611450 (Canada),
   by the Canada Research Chair program, and by the
   Research Grant J1-8130 of ARRS (Slovenia).}
\fntext[mohar2]{On leave from:
    IMFM \& FMF, Department of Mathematics, University of Ljubljana, Ljubljana,
    Slovenia.}

\begin{abstract}
In 1999, at one of his last public lectures, Tutte discussed a question he had considered since the times of the Four Color Conjecture.  He asked whether the 4-coloring complex of a planar triangulation could have two components in which all colorings had the same parity.  In this note we answer Tutte's question to the contrary of his speculations by showing that there are triangulations of the plane whose coloring complexes have arbitrarily many even and odd components.
\end{abstract}

\begin{keyword}
Coloring complex, 4-coloring, coloring
\end{keyword}

\end{frontmatter}

%\date{\today}

%\maketitle

\section{Introduction}

In \cite{TutteNotes}, Tutte asked if there existed a triangulation of the plane whose 4-coloring complex had more than one component of the same parity.  He based this question on numerous examples he examined over the course of many years.  One obvious possibility is the icosahedron, whose 4-colorings form ten Kempe equivalence classes.  However, Tutte found out that the icosahedron has a connected colouring complex.  Nonetheless, he was resistant to making this question into a conjecture that no such examples exist, since, in his view, ``the data are too few to justify a Conjecture.''  However, he asked: ``If anyone knows of any case of two components of the same parity, I would be glad to hear of it.''

In this short note, we provide examples of 4-connected triangulations of the plane whose 4-coloring complexes have arbitrarily many components with odd colorings and arbitrarily many components with even colorings. Although this seems to resolve the Tutte problem, we end up with a closely related conjecture, which is based on an extensive computation, and which claims that for every planar triangulation whose 4-coloring complex is disconnected has a component of even parity and one of odd parity (see Conjecture \ref{conj:new}).

\section{Coloring Complexes and the Parity of 4-Colorings}

Given a 4-colorable graph $G$, an independent set of vertices $A \subseteq V(G)$ is said to be a \emph{color class} if it is one of the color classes for some 4-coloring of $G$.  The \emph{$4$-coloring complex} $B(G)$ is the graph which has all the color classes of $G$ as its vertices and two vertices $C,D \in V(B(G))$ joined by an edge if the color classes $C$ and $D$ appear together in a 4-coloring of $G$.  These graphs were introduced by Tutte \cite{Tutte1}, who discussed their basic structure.  See Figure 2 for an example of a coloring complex.

A coloring complex may be disconnected.  To see which 4-colorings have their color classes in the same component of $B(G)$, the following notion can be used.  We say that two 4-colorings $f$ and $f^{\prime}$ of $G$ are adjacent, $f \sim f^{\prime}$, if they have a common color class.  Then we take the transitive closure of the adjacency relation and we say that two colorings are \emph{connected} if they are in the same equivalence class of the transitive closure.  Since each coloring corresponds to a 4-clique in $B(G)$, colorings are connected if and only if their cliques are in the same component of the graph $B(G)$.

In the same paper \cite{Tutte1}, Tutte introduced the notion of even and odd colorings, whose explanation requires a bit more background.  Given a 4-colorable graph $G$, consider a 4-coloring $f$ of $G$, whose color classes are $A, B, C, D$.  An \emph{AB-Kempe chain} in $G$ is a connected component of $G[A \cup B]$.  We will denote by $p(A,B)$ the number of $AB$-Kempe chains in $G$.  Notice that one can derive $2^{p(A,B)-1}-1$ new colorings of $G$ from $f$ by taking an arbitrary subset of $AB$-Kempe chains and swapping the colors on each $AB$-Kempe chain in the subset.  This operation is called an \emph{$AB$-Kempe change}.

Given these definitions, Tutte defines
\begin{equation}\label{eq:1}
  J_{A}(f) = p(A,B) + p(A,C) + p(A,D) - p(B,C) - p(B,D) - p(C,D).
\end{equation}
Observe that, for a given 4-coloring $f$, the parity of $J_{A}(f)$ equals the parity of $J_{X}(f)$, for each $X \in \{ B,C,D \}$.  This motivates calling a 4-coloring $f$ an \emph{even 4-coloring} if $J_{A}(f)$ is even and calling $f$ an \emph{odd 4-coloring} if $J_{A}(f)$ is odd.  Moreover, following Tutte \cite{Tutte1}, we can easily show that every coloring in the same connected component of the 4-coloring complex $B(G)$ has the same parity if $G$ is a triangulation of the plane.  We include a short proof for the sake of completeness.

Given a particular 4-colouring $f$ of a planar triangulation $T$, we will denote by $A,B,C,D$ its color classes, by $p(A,B)$ the number of $AB$-Kempe chains, and by $e(A,B)$ the number of edges in $G[A\cup B]$. Finally, we will denote by $\deg(A)$ the sum of the degrees of all the vertices in $A$.  The same notation applies for other choices of color classes.

\begin{theorem}[Tutte \cite{Tutte1}] \label{thm: Tutte}
Let $T$ be a triangulation of the plane with $n$ vertices and $A$ a color class of its $4$-coloring $f$. Then
\begin{equation}
\label{eq:2}
   J_{A}(f) = 2 \vert A \vert - \deg(A) + n - 3.
\end{equation}
\end{theorem}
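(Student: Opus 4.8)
The plan is to prove, for each of the three partitions of the four color classes into two pairs, the identity
\begin{equation*}
  p(A,B) - p(C,D) = |A| + |B| - e(A,B) - 1 ,
\end{equation*}
and then to add the three such identities. Their left-hand sides sum to $p(A,B)+p(A,C)+p(A,D)-p(B,C)-p(B,D)-p(C,D) = J_{A}(f)$, while their right-hand sides sum to $3|A| + (|B|+|C|+|D|) - \big(e(A,B)+e(A,C)+e(A,D)\big) - 3$, which equals $2|A| + n - \deg(A) - 3$ since $|B|+|C|+|D| = n-|A|$ and, $A$ being independent, $e(A,B)+e(A,C)+e(A,D) = \deg(A)$. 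This is exactly \eqref{eq:2}.

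To obtain the pairwise identity I would regard $T[A\cup B]$ and $T[C\cup D]$ as plane graphs with the embedding inherited from $T$, and write $f_{AB}$, $f_{CD}$ for their numbers of faces. Euler's formula, applied to a plane graph possibly having several components, gives $f_{AB} = 1 + p(A,B) - |A| - |B| + e(A,B)$ and the analogous identity for $C,D$. Adding these and using the count $e(A,B)+e(C,D) = n-2$ — valid because every face of $T$ is a triangle using exactly three colors, every $AB$-edge lies on exactly two faces (each using $A$ and $B$ together with one of $C,D$), conversely every such face carries exactly one $AB$-edge, and similarly for $CD$-edges, so $2e(A,B)+2e(C,D)$ counts each of the $2n-4$ faces once — yields
\begin{equation*}
  f_{AB} + f_{CD} = p(A,B) + p(C,D) .
\end{equation*}

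It then remains to show $f_{AB} = p(C,D)$ and, symmetrically, $f_{CD} = p(A,B)$; each pairwise identity follows by substituting back into the Euler relation. Since every component of $T[C\cup D]$ is a connected plane subgraph disjoint from $T[A\cup B]$, it lies inside a single face of $T[A\cup B]$, so $p(C,D)$ is at least the number of faces of $T[A\cup B]$ whose interior meets $C\cup D$. The essential point — the only use of the triangulation hypothesis besides $e(A,B)+e(C,D)=n-2$ — is that \emph{every} face of $T[A\cup B]$ contains a vertex of $C\cup D$ in its interior: if a face $\phi$ had no vertex of $T$ in its interior then, since any $T$-edge joining two vertices of $\partial\phi$ would be an $AB$-edge and hence already lie in $T[A\cup B]$ (so could not be interior to $\phi$), the closed region $\bar\phi$ would be a single face of $T$, i.e.\ a triangle; this is impossible because $\partial\phi$ consists of $AB$-edges and so uses only two colors. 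Hence $p(C,D)\ge f_{AB}$ and $p(A,B)\ge f_{CD}$, and together with $f_{AB}+f_{CD}=p(A,B)+p(C,D)$ both inequalities must be equalities.

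The step I expect to demand the most care is this last structural claim: arguing cleanly that a face of $T[A\cup B]$ with no interior vertex of $T$ must be a single triangular face of $T$ (so that the bipartiteness of $T[A\cup B]$ produces the contradiction), and checking the degenerate cases where $T[A\cup B]$ has isolated vertices or no edges at all. Everything after that is routine manipulation of Euler's formula together with the face–color count.
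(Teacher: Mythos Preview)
Your proposal is correct and follows essentially the same route as the paper: establish $e(A,B)+e(C,D)=n-2$, apply Euler's formula to $T[A\cup B]$, identify faces of $T[A\cup B]$ with $CD$-Kempe chains to get $p(A,B)-p(C,D)=|A|+|B|-e(A,B)-1$, and sum the three pair-identities. The only (minor) difference is that where the paper simply asserts each face of $T[A\cup B]$ contains exactly one $CD$-chain, you prove $f_{AB}\le p(C,D)$ and $f_{CD}\le p(A,B)$ separately and then use $f_{AB}+f_{CD}=p(A,B)+p(C,D)$ to force equality---a tidy way to sidestep the ``at most one'' direction.
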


\begin{proof}
Let $t=2n-4$ denote the number of facial triangles of $T$ and observe that the edges of the dual graph $T^*$ of $T$ that correspond to $AB$-edges and $CD$-edges form a perfect matching in $T^*$, which shows that
\begin{equation}
\label{eq:3}
   e(A,B) + e(C,D) = t/2 = n-2.
\end{equation}

Let us now consider the subgraph $T[A\cup B]$ and its faces in the plane. Since it has $|A|+|B|$ vertices, $e(A,B)$ edges, and $p(A,B)$ components, it is easy to see that it has precisely $e(A,B) - (|A|+|B|-p(A,B)) + 1$ faces. Each of the faces contains precisely one $CD$-Kempe chain, so we conclude that $p(C,D) = e(A,B) - (|A|+|B|-p(A,B))+1$. By exchanging the roles of $AB$ vs. $CD$, we get $p(A,B) = e(C,D) - (|C|+|D|-p(C,D)) +1$. By combining these two equations, using that $|A|+|B|+|C|+|D|=n$, and using (\ref{eq:3}), we obtain that
\begin{equation}
\label{eq:4}
   p(A,B)-p(C,D) = |A| + |B| - e(A,B) - 1.
\end{equation}
Finally, by using the same equations for $p(A,C)-p(B,D)$ and $p(A,D)-p(B,C)$ and taking their sum, we obtain:
\begin{equation}
\label{eq:5}
   J_{A}(f) = 3|A| +|B|+|C|+|D| - (e(A,B)+e(A,C)+e(A,D)) - 3 =
   2 \vert A \vert - \deg(A) + n - 3.
\end{equation}
This completes the proof.
\end{proof}

The following corollary then immediately follows from the equation (\ref{eq:2}).

\begin{corollary} \label{cor: Parity}
If $f$ and $g$ are two 4-colourings of $T$ with a common colour class $A$, then $J_{A}(f) = J_{A}(g)$.
\end{corollary}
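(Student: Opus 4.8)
The plan is to derive this directly from Theorem~\ref{thm: Tutte}. The subtlety worth spelling out is that $J_A(f)$ is defined in~(\ref{eq:1}) via the Kempe-chain counts $p(A,B), p(A,C), p(A,D), p(B,C), p(B,D), p(C,D)$, which a priori depend not only on the color class $A$ but also on how the remaining vertices $V(T)\setminus A$ are partitioned into the other three color classes $B,C,D$. So it is not evident from the definition alone that $J_A$ is a function of $A$ only. Theorem~\ref{thm: Tutte}, however, supplies the closed form $J_A(f) = 2\vert A\vert - \deg(A) + n - 3$, whose right-hand side involves only $n$ (the number of vertices of $T$), the size $\vert A\vert$ of the color class, and the total degree $\deg(A)$ of the vertices in $A$ --- quantities determined by $T$ and $A$ with no further reference to $f$.

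First I would apply~(\ref{eq:2}) to $f$, obtaining $J_A(f) = 2\vert A\vert - \deg(A) + n - 3$. Then I would apply the same equation to $g$: since $A$ is also a color class of the $4$-coloring $g$, Theorem~\ref{thm: Tutte} applies verbatim and gives $J_A(g) = 2\vert A\vert - \deg(A) + n - 3$ with the \emph{same} values of $\vert A\vert$, $\deg(A)$, and $n$ (these are read off from $T$ and $A$, which are shared). Equating the two right-hand sides yields $J_A(f) = J_A(g)$, as claimed.

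I expect no real obstacle here: all the work has already been done in proving Theorem~\ref{thm: Tutte}, and the corollary is simply the observation that the formula there does not see the coloring once the class $A$ is fixed. The only point to keep in mind is that Theorem~\ref{thm: Tutte} genuinely uses that $T$ is a plane triangulation (the counting in~(\ref{eq:3}) relies on planarity and on all faces being triangles), so the corollary is stated in that same setting and needs nothing beyond it.
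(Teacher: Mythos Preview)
Your proposal is correct and matches the paper's approach exactly: the paper simply remarks that the corollary follows immediately from equation~(\ref{eq:2}), since the right-hand side depends only on $T$ and $A$. Your write-up spells out this one-line observation carefully, but the content is identical.
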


Now, by Corollary~\ref{cor: Parity}, any two 4-colorings in the same connected component of $B(T)$ will have the same parity.  Consequently, we will say that components of $B(T)$ are \emph{even} or \emph{odd} according to the parity of their colourings.

\section{4-Colorings of Triangulated Surfaces}

Fisk \cite{Fisk1} introduced an alternative view of the parity of 4-colorings.  We show how to reconcile this approach with Tutte's definition.

Given a triangulation $T$ of an orientable surface with one of its orientations fixed, we can view a 4-coloring $f$ of $T$ as a simplicial mapping onto the boundary of the tetrahedron, which will be denoted as $K_{4}$.  For each triangle $T_{i,j,l}$ of $K_{4}$, we consider the facial triangles in $T$ that are mapped onto $T$ in such a way that their orientation is preserved and those facial triangles whose orientation is reversed.  Let $t_{ijl}^{+}$ $(t_{ijl}^{-})$ be the number of triangular faces in $T$ that are mapped onto $T_{ijl}$ with positive (negative) orientation.  Then the value
\[ \deg(f) = t_{ijl}^{+} - t_{ijl}^{-} \]
is independent of the choice of the $ijl$ and is called the (\emph{homology}) \emph{degree of the mapping} $f : T \rightarrow K_{4}$.  Of course, modulo 2, $\deg(f)$ is equal to $t_{ijl} = t_{ijl}^{+} + t_{ijl}^{-}$, which is the number of triangular faces of $T$ with colors $i, j, l$.

If $T$ is a triangulation of a non-orientable surface, then we cannot define the values $t_{ijl}^{+}$ and $t_{ijl}^{-}$, but we can still define the parity (modulo 2) value of $t_{ijl}$ as $\deg(f)$.

Now, reinterpreting Tutte in the context of higher genus surfaces, we first define the \emph{Euler genus} of a surface triangulation $T$ to be equal to twice the genus of the surface if it is orientable, and be equal to the crosscap number of the surface if it is non-orientable. Then we define
\[ J_{A}(f) = 2 \vert A \vert - \deg(A) + n - 3 + g \]
where $n=|V(T)|$ and $g$ is the Euler genus of $T$.

\begin{theorem}
\label{thm: genusparity}
Let $T$ be a triangulation of a surface of Euler genus $g$ with $n=|V(T)|$ vertices. If $f$ is a $4$-coloring of $T$ and $A$ is one of its color classes, then
\begin{equation*}
  J_{A}(f) \equiv \deg(f) + n - 3 + g \pmod 2,
\end{equation*}
Consequently, the homology degree $\deg(f)$ has the same parity as $\deg(A)$.
\end{theorem}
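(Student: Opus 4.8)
The plan is to reduce the congruence to a single combinatorial identity about the color class $A$. Writing out the definition $J_A(f) = 2|A| - \deg(A) + n - 3 + g$, the term $2|A|$ vanishes modulo $2$ and $-\deg(A) \equiv \deg(A) \pmod 2$, so the asserted congruence $J_A(f) \equiv \deg(f) + n - 3 + g \pmod 2$ is equivalent to the single statement $\deg(A) \equiv \deg(f) \pmod 2$. Thus it suffices to prove the final sentence of the theorem, and the displayed congruence follows formally.

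To prove $\deg(A) \equiv \deg(f) \pmod 2$, normalize so that the colors are $\{1,2,3,4\}$ and $A$ is the class of color $1$. First I would observe that, since $T$ is a triangulation, every facial triangle has three pairwise adjacent vertices and hence three distinct colors; in particular a face contains at most one vertex of color $1$. Counting incidences between vertices of $A$ and the facial triangles containing them — each $v \in A$ lies on exactly $\deg(v)$ faces because its link is a cycle — gives $\deg(A) = \sum_{v\in A}\deg(v) = t_{123} + t_{124} + t_{134}$, where $t_{ijl}$ denotes the number of faces colored $\{i,j,l\}$. Since the four numbers $t_{123}, t_{124}, t_{134}, t_{234}$ sum to the total number $t$ of faces of $T$, we get $\deg(A) \equiv t - t_{234} \pmod 2$.

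Next I would pin down the parity of $t$. By the handshake identity for faces $3t = 2e$ and Euler's formula $n - e + t = 2 - g$ for a surface of Euler genus $g$, one obtains $t = 2(n - 2 + g)$, which is even. Hence $\deg(A) \equiv t_{234} \pmod 2$. Finally, by the definition of $\deg(f)$ recorded above, $\deg(f) \equiv t_{ijl} \pmod 2$ for every choice of $\{i,j,l\}$; taking $\{i,j,l\} = \{2,3,4\}$ — the three colors avoided by $A$ — yields $\deg(f) \equiv t_{234} \equiv \deg(A) \pmod 2$, as required.

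The argument is short, and the only place demanding a little care is the face count: one must invoke Euler's formula in the correct form for an arbitrary Euler genus (orientable or not) and combine it with $3t = 2e$ to conclude that $t$ is even; the remainder is bookkeeping. I would also remark that orientability is irrelevant here, since $\deg(f)$ enters the computation only modulo $2$, where it is well-defined on every closed surface.
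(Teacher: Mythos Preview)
Your proof is correct and essentially identical to the paper's. Both arguments normalize $A$ to a fixed color, express $\deg(A)$ as the total face count minus the number of faces colored by the three non-$A$ colors, invoke Euler's formula together with $3t=2e$ to see that the total face count $t=2(n-2+g)$ is even, and then identify $\deg(f)$ modulo $2$ with $t_{ijl}$ for that triple; the only cosmetic difference is that you first reduce the displayed congruence to $\deg(A)\equiv\deg(f)\pmod 2$ and then prove it, whereas the paper substitutes directly into the definition of $J_A(f)$.
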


\begin{proof}
We may assume that the color class $A$ corresponds to the color 4.  Let $T_{123}$ be the triangle of $K_{4}$ corresponding to the other 3 colors.  Then $\deg(f) \equiv t_{123} \pmod 2$. On the other hand,
\begin{equation*}
\deg(A) = \sum_{v \in A} \deg(v) = t_{124} + t_{134} + t_{234}
= \vert F(T) \vert - t_{123},
\end{equation*}
where $F(T)$ denotes the set of the triangular faces of $T$. Since $T$ is a triangulation, Euler's formula implies that $|F(T)| = \tfrac{2}{3}|E(T)| = 2n-4+2g$.
Thus,
\begin{flalign*}
J_{A}(f) & = 2 \vert A \vert - \deg(A) + n - 3 + g \\
& = 2 \vert A \vert + t_{123} - \vert F(T) \vert + n - 3 + g \\
& \equiv t_{123} + n - 3 + g \equiv \deg(f) + n - 3 + g \pmod 2.
\end{flalign*}
\end{proof}

\section{Coloring Complexes with Many Components}

We have found two triangulations of the plane on 12 vertices which answer Tutte's question in the affirmative.  Their 4-coloring complexes each have three components, from which it follows that two of these components must have the same parity.  These triangulations are drawn in Figure 1.

\begin{figure}[htb]
     \centering
     \subfloat[][Example 1]{\includegraphics[width=0.42\textwidth]{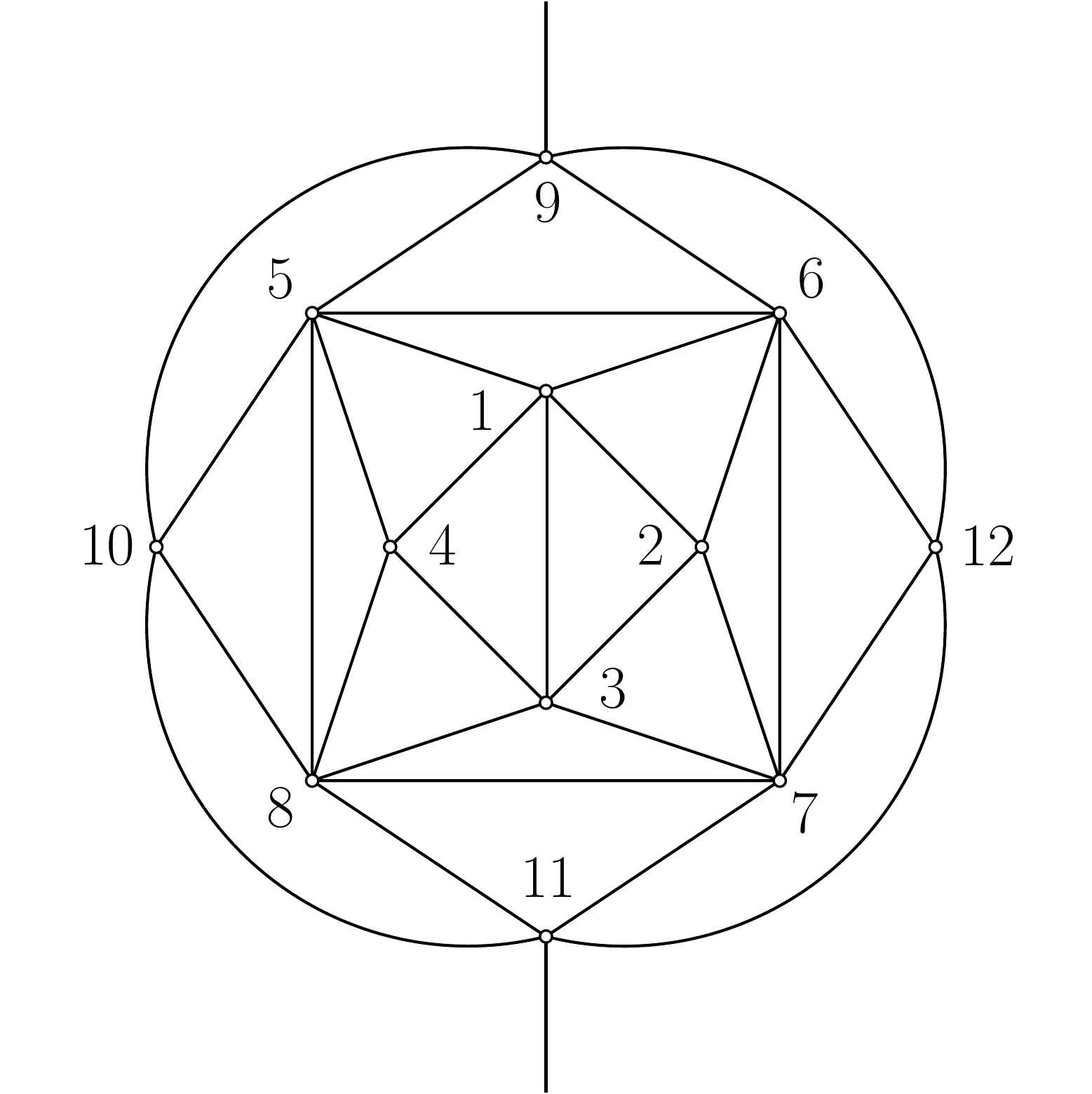}\label{<figure1>}}
     \qquad
     \subfloat[][Example 2]{\includegraphics[width=0.42\textwidth]{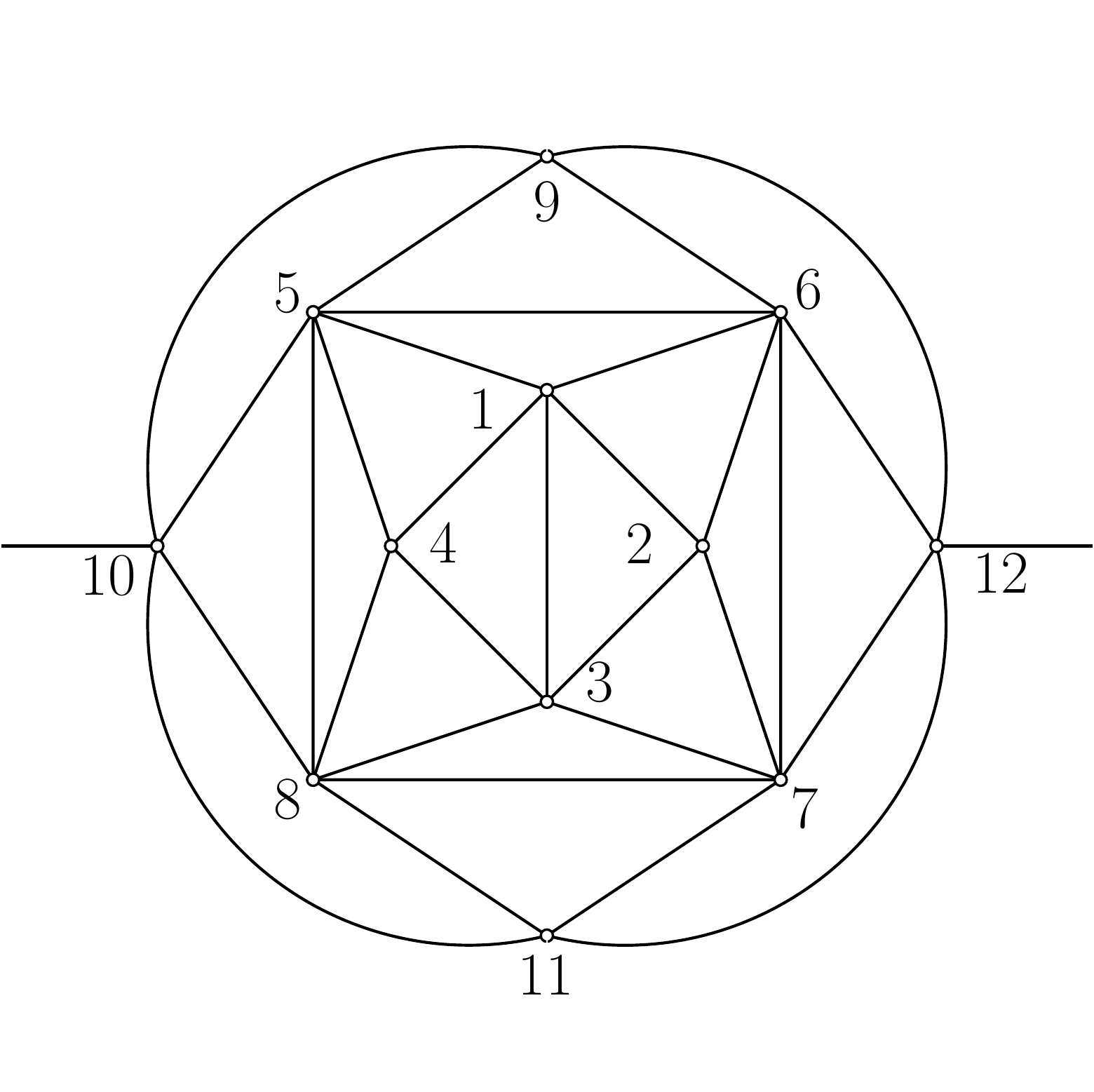}\label{<figure2>}}
     \caption{Triangulations whose 4-coloring complexes have three components.  The half-edges sticking out form the edges 9-11 and 10-12, respectively.}
\end{figure}

We will now determine the 4-coloring complex of Example 1, and show that it has 3 connected components (the argument for Example 2 is similar).

\begin{theorem} \label{thm: Example 1}
The 4-coloring complex of the first example $G$ given in Figure 1(a) is the graph drawn in Figure 2.
\end{theorem}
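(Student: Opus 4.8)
The plan is to enumerate all $4$-colorings of $G$ explicitly, read off their color classes, and then assemble $B(G)$ by hand. First I would record the closed neighborhood of each of the twelve vertices from Figure~1(a). Since $G$ is a $4$-connected planar triangulation, it has no separating triangle, so its only triangles are its $20$ faces, and its minimum degree is at least $4$. Triangulations are rigid enough that $4$-colorings propagate: whenever three consecutive neighbors of a vertex on its link already carry three distinct colors the vertex's color is forced, and at a degree-$4$ vertex the two pairs of opposite link-neighbors must each be monochromatic, in one of only two ways. These two local rules drive the enumeration.

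Concretely, I would fix one facial triangle, say on vertices $a,b,c$, and color it $1,2,3$; every $4$-coloring of $G$ then arises from one of these normalized colorings by applying one of the $24$ permutations of the color set, so it suffices to list the normalized ones. Starting from $\{a,b,c\}$ and repeatedly applying the forced-color rule, branching only at the few vertices where a genuine choice survives, produces a short decision tree whose leaves are exactly the $4$-colorings of $G$ (normalized at $a,b,c$). I expect only a handful of leaves, consistent with $B(G)$ being the modestly sized graph of Figure~2; organizing this case analysis so that it is visibly exhaustive is the bulk of the work.

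From the resulting list of colorings I would then extract the vertex set of $B(G)$: each coloring contributes its four color classes as independent sets, and $V(B(G))$ is the union of these over all colorings with coinciding sets identified. Two color classes $C$ and $D$ span an edge of $B(G)$ precisely when some coloring in the list has both among its four classes, so the edge set is read directly off the list. Comparing the graph so obtained with the one drawn in Figure~2 is then a routine isomorphism check, and the $3$ connected components are exhibited by taking the transitive closure of the ``share a color class'' relation on the coloring list. As a consistency check one can compute $J_A(f) = 2|A| - \deg(A) + 9$ for a representative color class $A$ of each component via Theorem~\ref{thm: Tutte}; this confirms that two of the three components have the same parity, which is the point of the example. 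The main obstacle throughout is purely organizational: guaranteeing completeness of the coloring enumeration, since everything downstream of it is mechanical. The argument for Example~2 proceeds in exactly the same way.
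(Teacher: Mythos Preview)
Your plan is correct and matches the paper's approach: both proofs enumerate all $4$-colorings of $G$ explicitly and then read $B(G)$ directly off the list. The only difference is organizational: the paper precolors the separating $4$-cycle on vertices $5,6,7,8$ (four cases up to relabeling) rather than a facial triangle, which lets the interior and exterior be colored independently and exploits the mirror symmetry of $G$ about that cycle, yielding the six colorings with very little branching.
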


\begin{proof}
We need to determine all the 4-colorings $f$ of $G$.  We start by precoloring the 4-cycle with vertices labelled 5, 6, 7 and 8.  Up to relabelling, this 4-cycle can be precolored in 4 ways: \\[1mm]
(a) $f(5) = 1, f(6) = 2, f(7) = 3, f(8) = 4$. \\
(b) $f(5) = 1, f(6) = 2, f(7) = 1, f(8) = 2$. \\
(c) $f(5) = 1, f(6) = 2, f(7) = 1, f(8) = 3$. \\
(d) $f(5) = 1, f(6) = 2, f(7) = 3, f(8) = 2$. \\[1mm]
We consider these cases separately.

(a) Suppose that $f(1) = 3$.  Then $f(4) = 2$, $f(3) = 1$ and $f(2) = 4$.  Alternatively, if $f(1) = 4$, then $f(2) = 1$, $f(3) = 2$, and $f(4) = 3$.  Now, as $G$ is symmetric about the precolored 4-cycle, we also have exactly 2 ways of coloring the exterior of this 4-cycle.  As a consequence, we find that there are exactly 4 colorings satisfying (a). Their color classes are:\\
$\{ \{ 3, 5, 11 \}, \{ 4, 6, 10 \}, \{ 1, 7, 9 \}, \{ 2, 8, 12 \} \}$, $\{ \{ 3, 5, 12 \}, \{ 4, 6, 11 \}, \{ 1, 7, 10 \}, \{ 2, 8, 9 \} \}$, \\ $\{ \{ 2, 5, 11 \}, \{ 3, 6, 10 \}, \{ 4, 7, 9 \}, \{ 1, 8, 12 \} \}$, $\{ \{ 2, 5, 12 \}, \{ 3, 6, 11 \}, \{ 4, 7, 10 \}, \{ 1, 8, 9 \} \}$.

(b) We may assume that $f(1) = 3$.  Then $f(2) = 4$, and so vertex 3 has no available color.  Consequently, no 4-colorings of $G$ satisfy (b).

(c) Suppose that $f(1) = 3$.  Then $f(2) = 4$, so $f(3) = 2$.  Thus, $f(4) = 4$.  On the other hand, if $f(1) = 4$, then $f(2) = 3$.  Consequently, $f(3) = 2$, and vertex 4 cannot be colored.  Hence, this coloring does not occur.  Thus, we find that there is exactly one coloring satisfying (c): $\{ \{ 5, 7 \}, \{ 3, 6, 11 \}, \{ 1, 8, 9 \}, \{ 2, 4, 10, 12 \} \}$.

(d) This case is mirror-symmetric to (c).  We obtain exactly one coloring satisfying (d): \\
$\{ \{ 3, 5, 11 \}, \{ 6, 8 \}, \{ 1, 7, 9 \}, \{ 2, 4, 10, 12 \} \}$.

The six colorings of $G$ clearly make the coloring complex exhibited in Figure \ref{fig:2}.
\end{proof}

\begin{figure}[htb]
     \centering
     {\includegraphics[width=0.8\textwidth]{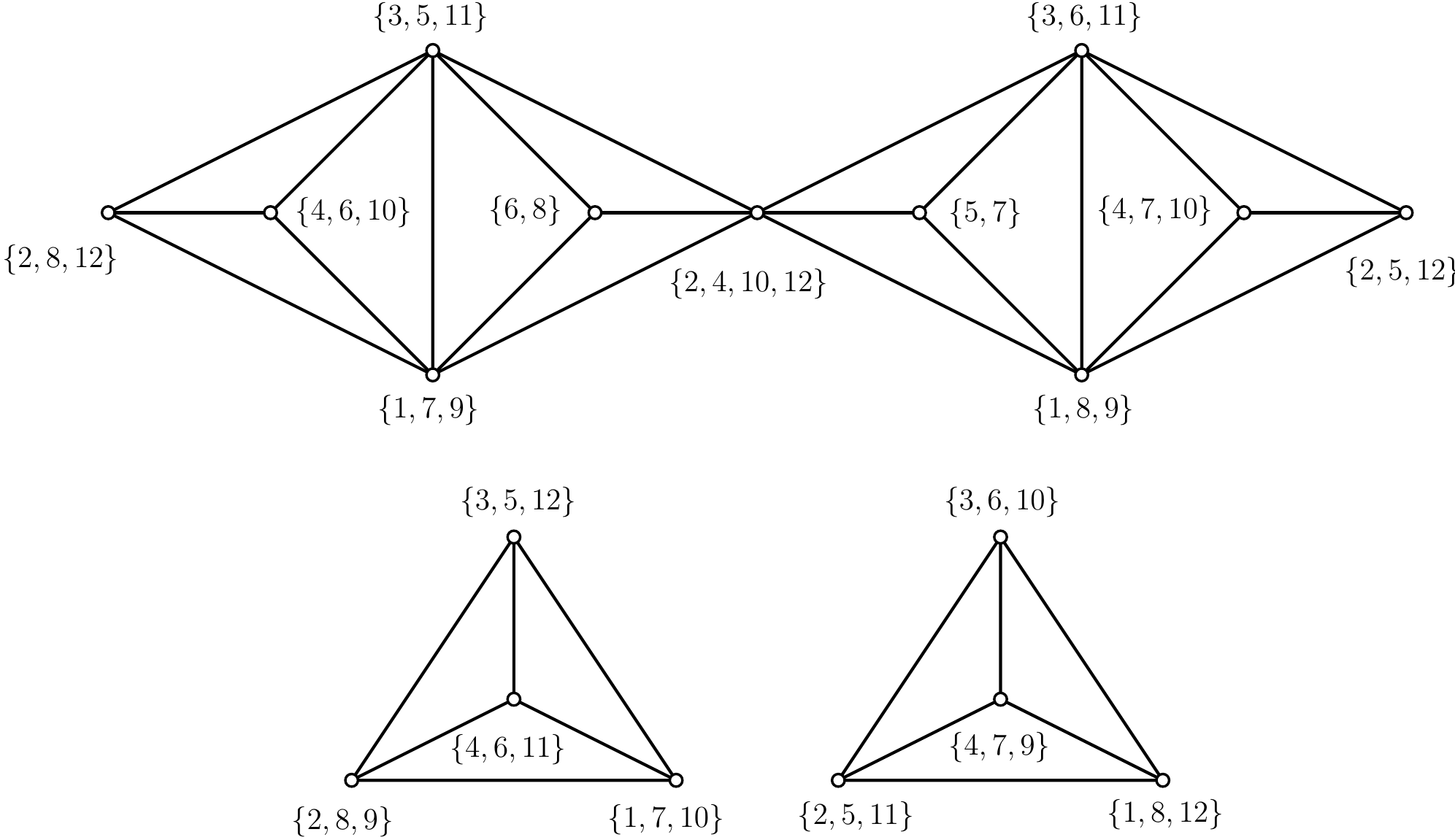}}
     \caption{The coloring complex $B(G)$}
     \label{fig:2}
\end{figure}

The coloring complex $B(G)$ from Theorem~\ref{thm: Example 1} certainly has three components, two of which have even parity, while the third one has odd parity.  This provides an affirmative answer to Tutte's question.  The triangulation shown in Figure 1(b) is not isomorphic to the previous example (this is easy to see by noting that the first graph has two degree 5 vertices with two common neighbors of degree 6, while the the second graph does not) but their coloring complexes are isomorphic.  It can be shown through computation that there are no such examples on 11 or fewer vertices. (Let us observe that using our Lemma \ref{lem: sum}, the computation can be reduced to 4-connected triangulations.)

We can now easily construct infinitely many examples by successively adding vertices to a triangle in $G$.  For example, consider the triangle 123 in $G$.  If we add vertex 13 in the centre of this triangle, and join it to vertices 1, 2 and 3 by edges, then we obtain a new triangulation $G^{\prime}$ of the plane such that $B(G) \cong B(G^{\prime}$).  As we can repeat this procedure indefinitely, this approach yields infinitely many examples of graphs which have two components with even parity.

Indeed, with a bit more effort, we can obtain a stronger result in this direction.

\begin{theorem} \label{thm: infinite}
Let $k \in \mathbb{N}$.  Then there are infinitely many triangulations of the plane, whose coloring complex has more than $k$ components with even colorings and more than $k$ components with odd colorings.
\end{theorem}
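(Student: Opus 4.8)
The plan is to build the required triangulations out of the $12$-vertex gadget $G$ of Figure~1(a), exploiting that by Theorem~\ref{thm: Example 1} the complex $B(G)$ has exactly two even components and one odd component, together with a ``triangle sum'' operation on plane triangulations that multiplies coloring complexes componentwise and adds parities modulo $2$ (this is essentially the content of Lemma~\ref{lem: sum}; I outline below what is needed). For plane triangulations $T_1,T_2$ with chosen facial triangles, let $T_1\oplus T_2$ denote the plane triangulation obtained by drawing $T_1$ with its chosen triangle as outer face, $T_2$ likewise, and identifying the two boundary triangles along a common labeling $x,y,z$; it is a simple plane triangulation on $|V(T_1)|+|V(T_2)|-3$ vertices in which $xyz$ is a separating triangle, and $T_1,T_2$ embed in it meeting only in $\{x,y,z\}$. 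I will only apply $\oplus$ to triangulations built from copies of $G$. Since the proof of Theorem~\ref{thm: Example 1} lists all $4$-colorings of $G$ and each uses four colors, $G$ is not $3$-colorable; as every factor of $T_1\oplus T_2$ is a subgraph of it, no triangulation obtained from copies of $G$ by $\oplus$ is $3$-colorable either, so in all of them every $4$-coloring uses all four colors. This rules out the degenerate ``missing colour'' cases below.

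The three facts I would establish about $B(T_1\oplus T_2)$ are as follows. \emph{(Colorings.)} Restriction is a bijection from the $4$-colorings of $T_1\oplus T_2$ (each viewed as a partition of the vertex set into four color classes) onto the pairs $(f_1,f_2)$ of $4$-colorings of $T_1$ and $T_2$; conversely a pair glues back uniquely, because the classes of $f_1$ and of $f_2$ containing $x$, $y$, $z$ are pinned to one another and the remaining class on each side is then forced to match. \emph{(Components.)} Two $4$-colorings $f\leftrightarrow(f_1,f_2)$ and $f'\leftrightarrow(f_1',f_2')$ lie in the same component of $B(T_1\oplus T_2)$ if and only if $f_1,f_1'$ lie in the same component of $B(T_1)$ and $f_2,f_2'$ lie in the same component of $B(T_2)$. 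For ``only if'': if $f$ and $f'$ share a color class $P$, then $P\cap V(T_i)$ is a common color class of $f_i$ and $f_i'$, and one passes to the transitive closure. For ``if'': lift a walk from $f_1$ to $f_1'$ in $B(T_1)$ while keeping the second coordinate equal to $f_2$; a single step is a class $Q$ common to two consecutive colorings of $T_1$, and then $Q\cup(R\setminus\{x,y,z\})$, where $R$ is the class matched to $Q$ on the $T_2$-side, is a common class of the two corresponding colorings of $T_1\oplus T_2$; then repeat in the second coordinate. \emph{(Parity.)} By Theorem~\ref{thm: Tutte}, $J_A(f)\equiv\deg_{T}(A)+|V(T)|-3\pmod 2$ for every plane triangulation $T$. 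Apply this to $T=T_1\oplus T_2$ and a color class $A$ of $f$ disjoint from $\{x,y,z\}$ (one exists since $f$ uses four colors), writing $A=A_1\sqcup A_2$ with $A_i=A\cap V(T_i)$. Each vertex of $A_i$ keeps its degree on passing from $T_i$ to $T$, so $\deg_T(A)=\deg_{T_1}(A_1)+\deg_{T_2}(A_2)$, and $|V(T)|-3=(|V(T_1)|-3)+(|V(T_2)|-3)$; combining these with the three instances of Theorem~\ref{thm: Tutte} gives $J_A(f)\equiv J_{A_1}(f_1)+J_{A_2}(f_2)\pmod 2$. Hence the component of $B(T_1\oplus T_2)$ indexed by a pair of components is even exactly when those two components have the same parity. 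In particular, if $B(T_i)$ has $e_i$ even and $o_i$ odd components, then $B(T_1\oplus T_2)$ has $e_1e_2+o_1o_2$ even and $e_1o_2+o_1e_2$ odd components.

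With this in hand, set $T^{(1)}=G$, so $(e_1,o_1)=(2,1)$, and $T^{(m+1)}=T^{(m)}\oplus G$, glued onto an arbitrary facial triangle. The recursion above gives $e_{m+1}=2e_m+o_m$ and $o_{m+1}=e_m+2o_m$, whence $e_m+o_m=3^m$ and $e_m-o_m=1$, so $e_m=\tfrac12(3^m+1)$ and $o_m=\tfrac12(3^m-1)$, both tending to infinity. Given $k\in\mathbb{N}$, choose $m$ with $3^m>2k+1$; then $B(T^{(m)})$ has more than $k$ even and more than $k$ odd components. Finally, to pass from one such triangulation to infinitely many, recall the observation made just before this theorem: inserting a vertex of degree $3$ in a facial triangle of a plane triangulation leaves the isomorphism type of its coloring complex unchanged. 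Iterating that on $T^{(m)}$ produces infinitely many pairwise non-isomorphic plane triangulations, each with coloring complex isomorphic to $B(T^{(m)})$, and therefore each with more than $k$ even and more than $k$ odd components.

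I expect the crux to be the ``if'' half of the \emph{Components} claim: one must check that a single edge of $B(T_1)$ lifts to an edge of $B(T_1\oplus T_2)$ once the $T_2$-coordinate is frozen, which needs a short case distinction according to whether the shared color class meets the gluing triangle $xyz$. The only other point requiring care is to confirm that non-$3$-colorability (equivalently, that every $4$-coloring uses all four colors) is inherited by every $\oplus$-iterate of $G$, so that the \emph{Colorings} and \emph{Parity} statements apply at each stage; this is immediate because each factor of $T_1\oplus T_2$ is one of its subgraphs.
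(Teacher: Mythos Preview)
Your proposal is correct and follows essentially the same route as the paper: the paper simply states that Theorem~\ref{thm: infinite} ``follows directly from Theorem~\ref{thm: Example 1} and the following lemma,'' and that lemma (Lemma~\ref{lem: sum}) is exactly your \emph{Colorings/Components/Parity} package for the triangle sum, proved in the same way. Your recursion $(e_{m+1},o_{m+1})=(2e_m+o_m,\,e_m+2o_m)$ and the final degree-$3$ insertion step merely make explicit what the paper leaves to the reader; the non-$3$-colorability check you flag is harmless but not actually needed, since Lemma~\ref{lem: sum} holds regardless of whether some colour classes are empty.
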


This theorem follows directly from Theorem~\ref{thm: Example 1} and the following lemma.  However, this lemma requires some further notation.

Let $G$ and $H$ be triangulations of surfaces of Euler genus $g_1$ and $g_2$, respectively. Let $T_1 = xyz$ ($T_2 = x'y'z'$) be a face in $G$ ($H$).  By identifying $x$ with $x'$, $y$ with $y'$, and $z$ with $z'$, we obtain a new triangulation $G \bigtriangleup H$ with $\vert V(G) \vert + \vert V(H) \vert - 3$ vertices and with Euler genus $g_1+g_2$.  Any 4-coloring of $G \bigtriangleup H$ induces a 4-coloring $f$ on $G$ and a 4-coloring $h$ on $H$, and then we use $f \bigtriangleup h$ to denote the coloring of $G \bigtriangleup H$.  This gives a bijection between colorings of $G \bigtriangleup H$ and pairs $(f,h)$ of colorings of $G$ and $H$.

\begin{lemma} \label{lem: sum}
(a) If two 4-colorings $f \bigtriangleup h$ and $f^{\prime} \bigtriangleup h^{\prime}$ of $G \bigtriangleup H$ are adjacent in $B(G \bigtriangleup H)$, then $f \sim f^{\prime}$ in $B(G)$, and $h \sim h^{\prime}$ in $B(H)$.  Conversely, if $f \sim f^{\prime}$ in $B(G)$, and $h \sim h^{\prime}$ in $B(H)$, then $f \bigtriangleup h \sim f^{\prime} \bigtriangleup h \sim f^{\prime} \bigtriangleup h^{\prime}$ in $B(G \bigtriangleup H)$.  \\
(b) If $A$ is a color class of $f \bigtriangleup h$, then $J_{A}(f \bigtriangleup h) = J_{A \cap V(G)}(f)  + J_{A \cap V(H)}(h)$.
\end{lemma}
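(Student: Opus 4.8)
The plan is to treat the two parts separately: part (a) is a bookkeeping argument about how the color classes of $G\bigtriangleup H$ restrict to $G$ and to $H$, while part (b) is a direct computation with the closed formula for $J_A$ established in Theorem~\ref{thm: Tutte} and extended to surfaces in Section~3. The structural fact I would set up first is a description of the color classes of $G\bigtriangleup H$: a $4$-coloring of $G\bigtriangleup H$ restricts to colorings $f$ of $G$ and $h$ of $H$ agreeing on the identified triangle $T_1=T_2$, and conversely, given any pair $(f,h)$, the three color classes of $f$ meeting $T_1$ are glued to the three color classes of $h$ meeting $T_2$ according to which triangle-vertex each contains, while the (at most one) remaining ``fourth'' class of $f$ is glued to the remaining class of $h$ (either or both of these possibly empty). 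No new edges are created, so this always yields a proper coloring, and every color class of $f\bigtriangleup h$ has the form $A=(A\cap V(G))\cup(A\cap V(H))$ with $|A\cap\{x,y,z\}|\le 1$, where $A\cap V(G)$ (resp.\ $A\cap V(H)$) is a color class of $f$ (resp.\ of $h$) whenever it is nonempty.

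For the forward direction of (a), I would suppose $f\bigtriangleup h$ and $f'\bigtriangleup h'$ share a color class $A$. If $A$ meets $\{x,y,z\}$ then both $A\cap V(G)$ and $A\cap V(H)$ are nonempty common color classes of $f,f'$ and of $h,h'$ respectively, giving $f\sim f'$ and $h\sim h'$ at once. If $A$ is disjoint from the triangle then $A$ is the ``fourth'' class on at least one side; say $A\cap V(G)=\emptyset$ (the other case being symmetric). Then $f$ and $f'$ use only the three colors appearing on $T_1$, i.e.\ they are $3$-colorings of the triangulation $G$; since a $3$-coloring of a triangulation is unique up to permutation of colors (the coloring of one face propagates across the connected dual graph), $f$ and $f'$ induce the same partition of $V(G)$, so $f=f'$ and in particular $f\sim f'$, while $A\cap V(H)\neq\emptyset$ gives $h\sim h'$ as before.

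For the converse in (a), by transitivity of $\sim$ it suffices to show: if $f,f'$ are adjacent in $B(G)$, sharing a color class $A_0\subseteq V(G)$, and $h$ is any coloring of $H$, then $f\bigtriangleup h$ and $f'\bigtriangleup h$ are adjacent in $B(G\bigtriangleup H)$; chaining this along a path from $f$ to $f'$ yields $f\bigtriangleup h\sim f'\bigtriangleup h$, and the symmetric statement with $f'$ fixed yields $f'\bigtriangleup h\sim f'\bigtriangleup h'$. For the claim, $A_0$ is a color class of both $f$ and $f'$: if $A_0$ contains a triangle-vertex $v$, then in each of $f\bigtriangleup h$ and $f'\bigtriangleup h$ the class containing $v$ is exactly $A_0\cup(\text{the class of }h\text{ containing }v)$; if $A_0$ misses the triangle, then $A_0$ is the fourth class of both $f$ and $f'$ (the other three contain $x$, $y$, $z$) and in both glued colorings it is extended by the fourth class of $h$. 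Either way $f\bigtriangleup h$ and $f'\bigtriangleup h$ have a common color class.

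Finally, part (b) I would obtain by straight computation from $J_A(f)=2|A|-\deg(A)+n-3+g$: a vertex of $G$ or of $H$ off the shared triangle keeps its degree in $G\bigtriangleup H$; a triangle-vertex $v\in A$ has $\deg_{G\bigtriangleup H}(v)=\deg_G(v)+\deg_H(v)-2$; and $|V(G\bigtriangleup H)|=|V(G)|+|V(H)|-3$ with Euler genus $g_1+g_2$. Splitting into the cases $A\cap\{x,y,z\}=\emptyset$ and $|A\cap\{x,y,z\}|=1$, substituting, and cancelling the overlap terms gives $J_A(f\bigtriangleup h)=J_{A\cap V(G)}(f)+J_{A\cap V(H)}(h)$ exactly (when one intersection is empty one reads $J_\emptyset$ off the same formula, i.e.\ $J_\emptyset(f)=|V(G)|-3+g_1$); together with Theorem~\ref{thm: genusparity} this also re-derives additivity of the parity. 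The step I expect to require the most care is the color-class bookkeeping in part (a)—in particular correctly tracking the ``fourth'' color class, which need not meet the identified triangle and can be empty on one side—whereas part (b) is routine arithmetic once the degree/size/genus formula is in place.
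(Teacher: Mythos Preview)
Your proposal is correct and follows essentially the same approach as the paper's proof: for (a) you restrict a shared color class to each factor, and for the converse you show that fixing one coordinate and moving the other preserves a shared class; for (b) you plug into the closed formula $J_A=2|A|-\deg(A)+n-3+g$ and split according to whether $A$ meets the identified triangle, exactly as the paper does.

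The one place where you do slightly more than the paper is the handling of the possibility $A\cap V(G)=\emptyset$ in the forward direction of (a). The paper simply asserts that ``$A\cap V(G)$ is a color class common to $f$ and $f'$,'' implicitly allowing the empty set as a color class when $G$ is $3$-colorable; you instead observe that in this situation both $f$ and $f'$ are $3$-colorings of the triangulation $G$, hence identical, which avoids having to decide whether $\emptyset$ is a vertex of $B(G)$. Both treatments are fine, and yours is arguably cleaner. The same remark applies to your reading of $J_\emptyset$ in part (b): the paper sidesteps this by invoking Corollary~\ref{cor: Parity} to first pass to the color class disjoint from the triangle, but the arithmetic comes out the same either way.
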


\begin{proof}
(a) By definition, $f \bigtriangleup h \sim f^{\prime} \bigtriangleup h^{\prime}$ means that $f \bigtriangleup h$ and $f^{\prime} \bigtriangleup h^{\prime}$ have a color class in common, say $A$.  Then $A \cap V(G)$ is a color class that is common to $f$ and $f^{\prime}$, so $f \sim f^{\prime}$.  Similarly, $h \sim h^{\prime}$.  The reverse statement is equally clear.

(b)  Consider the color class $A$ of $f \bigtriangleup h$ which is disjoint from the new triangle we created by identifying vertices in $G$ and $H$.  Recall that, by Corollary~\ref{cor: Parity}, computing $J_{A}(f \bigtriangleup h)$ for this color class is equivalent to computing $J_{C}(f \bigtriangleup h)$ for any color class $C$ of $f \bigtriangleup h$.  We write $A_{G}$ for $A \cap V(G)$ and $A_{H}$ for $A \cap V(H)$. Then:
\[ J_{A_{G}}(f) = 2 \vert A_{G} \vert - \deg_G(A_{G}) + n_1 - 3 + g_1, \]
\[ J_{A_{H}}(h) = 2 \vert A_{H} \vert - \deg_H(A_{H}) + n_2 - 3 + g_2. \]

Now, as $A_{G}$ and $A_{H}$ are disjoint, $\deg(A) = \deg_G(A_{G})+\deg_H(A_{H})$. We can use the above expressions to compute $J_{A}(f \bigtriangleup h)$:
\begin{flalign*}
J_{A}(f \bigtriangleup h) & = 2 \vert A \vert - \deg(A) + (n_1+n_2-3) - 3 + (g_1+g_2) \\
& = 2 (\vert A_{G} \vert + \vert A_{H} \vert) - \deg_G(A_{G}) - \deg_H(A_{H}) + (n_1-3+g_1) + (n_2-3+g_2) \\
& = J_{A_{G}}(f) + J_{A_{H}}(f).
\end{flalign*}

If the color class $A$ contains one of the identified vertices, the calculation is the same except that $|A| = |A_{G}| + |A_{H}| - 1$ and
$\deg(A) = \deg_G(A_{G}) + \deg_H(A_{H}) - 2$, which yields the same final outcome.
\end{proof}

\section{4-connected Examples}

Identification over a triangle produces triangulations that are not 4-connected. However, it may be that in the back of Tutte's mind was an implicit condition that the triangulations should be 4-connected. The following generalized examples show that one can also find 4-connected examples whose coloring complexes have many components (of the same parity). The two graphs $Q_1$ and $Q_1'$ in Figure 1 are just smallest examples in two infinite families of 4-connected triangulations.  We define $Q_0,Q_1,Q_2,\dots$ as follows. To obtain $Q_k$ we take $k+2$ nested 4-cycles $D_i=a_ib_ic_id_i$ ($i=0,1,\dots,k+1$). We connect $D_i$ with $D_{i+1}$ ($i=0,1,\dots,k$) as shown in Figure \ref{fig:Qk}. To obtain $Q_k$, we add the edges $a_0c_0$ and $a_{k+1}c_{k+1}$.
We also define $Q_k'$, which is obtained in the same way except that we add the edge $b_{k+1}d_{k+1}$ instead of $a_{k+1}c_{k+1}$.

\begin{figure}[htb]
     \centering
     {\includegraphics[width=0.42\textwidth]{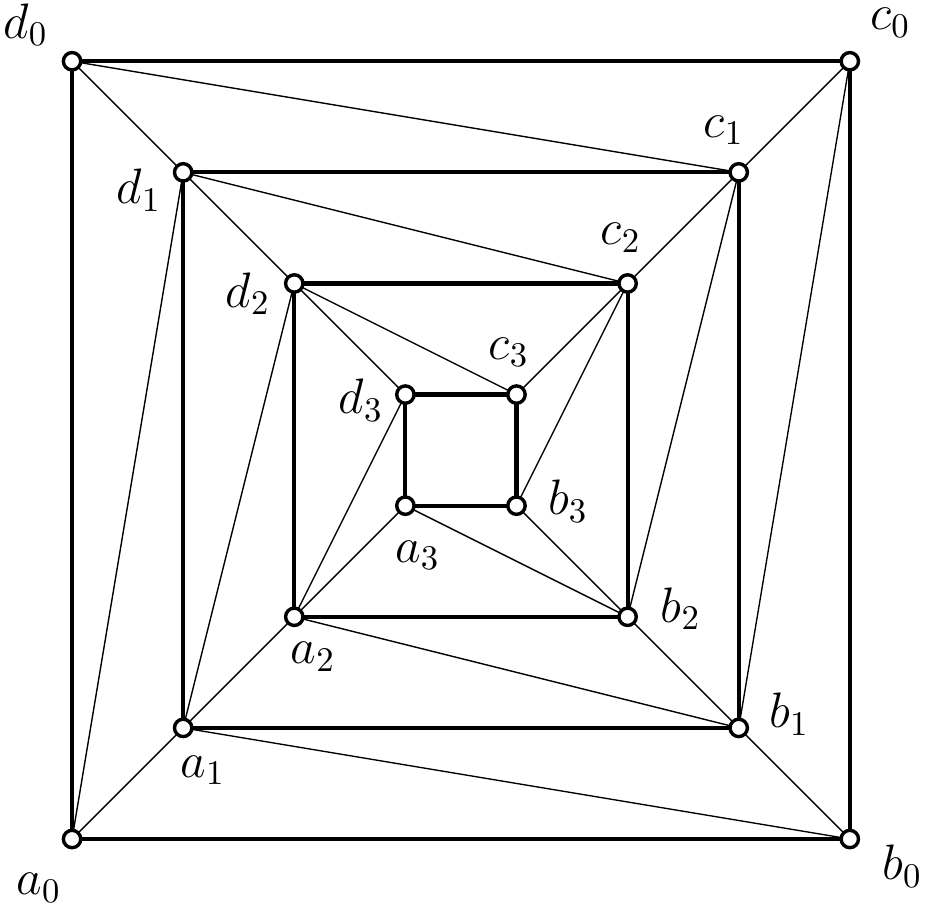}}
     \caption{Nested 4-cycles $D_0,D_1,D_2,D_3$. To obtain $Q_2$, we add the edges $a_0c_0$ and $a_3c_3$.}
     \label{fig:Qk}
\end{figure}

We say that a 4-coloring of $Q_k$ is of \emph{type I} if it uses all four colors on $D_0$, and of \emph{type II} if it uses only 3 colors on $Q_0$. Note that a coloring $f$ of type II has $f(b_0)=f(d_0)$ (and necessarily $f(a_0)\ne f(c_0)$).  As we saw in the proof of Theorem \ref{thm: Example 1}, every 4-coloring of type I on $D_i$ can be extended in two different ways to a 4-coloring on $D_{i+1}$, and both extensions are of type I (use all four colors on $D_{i+1}$ in the same clockwise order as on $D_i$). Similarly, every 3-coloring of type II on $D_i$ can be extended in two different ways to a 3-coloring on $D_{i+1}$, and both extensions are of type II. Of course, when $i=k$, only one of these 3-colorings of type II yields a 4-coloring in $Q_k$. This implies that $Q_k$ has precisely $2^{k+1}$ 4-colorings of type I and has precisely $2^k$ 4-colorings of type II.

Next, we can easily show that two 4-colorings of $Q_k$ of type I that have a color class in common must be identical (have all four color classes the same). In colorings of type I all color classes have a vertex in each $D_i$. A coloring of $Q_k$ of type II has two color classes that meet every $D_i$, while the other two color classes have two vertices in every second 4-cycle $Q_i$. Thus, if a color class of a type II coloring coincides with a color class of a type I coloring, then the color class is contained in precisely one coloring of type I and one coloring of type II. In this way, each coloring of type II has 2 color classes that coincide with two color classes of a 4-coloring of type I, and this correspondence is 1-1. The colorings $f$ of type I under this correspondence are precisely those fulfilling the following condition: either $\{f(b_1),f(d_1)\} = \{f(b_{k+1}),f(d_{k+1})\}$ (if $k$ is even) or $\{f(b_0),f(d_0)\} = \{f(b_{k+1}),f(d_{k+1})\}$ (if $k$ is odd). All of them are of odd parity and of even homology degree $\deg(f)$ (by Theorem \ref{thm: Tutte}, since the only vertices of odd degree are $a_0,c_0,a_{k+1}$ and $c_{k+1}$).

Finally, there are precisely $2^k$ 4-colorings of type I that do not fulfill the condition stated in the previous paragraph, and we can show that each of them forms a separate component in the coloring complex $B(Q_k)$. To see this, consider a color class $C$ of a coloring $f$ of type I. It has exactly one vertex in each of the nested 4-cycles $D_i$. By fixing the other three colors on vertices of $D_0$, it is easy to see that having a coloring on $D_i$ and knowing the vertex in $D_{i+1}\cap C$, the colors uniquely extend to $D_{i+1}$, for $i=0,1,\dots,k$. This shows that $f$ cannot share a color class with another coloring of type I. Thus, excluding the $2^{k+1}$ color classes in 4-colorings of type I which are Kempe equivalent to colorings of type II, we obtain $2^{k+2}$ color classes in the $2^{k}$ remaining 4-colorings of type I, each of which then forms a separate component in the coloring complex $B(Q_k)$.  Observe that all such colorings have even parity (odd homology degree).

It is also easy to see that all colorings of type II (together with their mates of type I) form a single component in $B(Q_k)$. To prove this, observe that two colorings of type II with a common class that intersects each $Q_i$ must be identical. However, if they have a common color class containing $b_0$ and $d_0$, then there are 2 ways to color each $D_i$ with $i$ odd, so there are $2^t$ colorings of type II with the same color class, where $t = \lfloor (k+1)/2 \rfloor$. And if they share a color class containing $a_1$ and $c_1$, or $b_1$ and $d_1$, then the same holds with $t = \lfloor k/2 \rfloor$.

In conclusion, $B(Q_k)$ has one (large) component containing all $2^{k+1}$ odd 4-colorings (they all have even homology degree) and $2^k$ components, each of which corresponds to a single coloring of even degree that is of type I.

The same holds for $Q_k'$.

\begin{theorem}
\label{thm:Qk}
The $4$-coloring complex of the $4$-connected triangulation $Q_k$ (and that of $Q_k'$) has $2^k + 1$ connected components. One of them corresponds to $2^{k+1}$ odd colorings, and each of the other $2^k$ components corresponds to a single even $4$-coloring (odd homology degree).
\end{theorem}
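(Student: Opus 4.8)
The plan is to organize the structural analysis developed above into four blocks: (i) enumerate all $4$-colorings of $Q_k$ and split them into two types; (ii) show that two distinct type I colorings never share a color class; (iii) determine exactly when a color class of a type II coloring coincides with one of a type I coloring, and use this to isolate one large component; and (iv) compute parities via Theorem~\ref{thm: Tutte}.

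First I would fix the labelling $D_i=a_ib_ic_id_i$ of the $k+2$ nested $4$-cycles and record how a coloring of $D_i$ extends across the ring of triangles joining $D_i$ to $D_{i+1}$. Call a coloring \emph{type I} if it uses all four colors on $D_0$, and \emph{type II} otherwise; in the latter case one checks $f(b_0)=f(d_0)$ and $f(a_0)\ne f(c_0)$. As in the proof of Theorem~\ref{thm: Example 1}, a type I coloring of $D_i$ has exactly two extensions to $D_{i+1}$, both of type I, and a type II coloring likewise has two type II extensions, except that the extra edge of $Q_k$ kills one of the choices at the outermost step. Propagating outward from $D_0$ therefore produces exactly $2^{k+1}$ colorings of type I and exactly $2^k$ of type II, matching the count $2^{k+1}+2^k$ in the statement.

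Next I would establish two adjacency facts. In a type I coloring every color class meets each $D_i$ in exactly one vertex, and knowing the coloring of $D_i$ together with the vertex of a given class lying in $D_{i+1}$ forces the coloring of all of $D_{i+1}$; hence two type I colorings sharing a color class agree on every $D_i$ and are equal. In a type II coloring the two classes through $a_0$ and $c_0$ meet each $D_i$ once, while the two classes through $b_0=d_0$ have two vertices in every other $D_i$. Comparing these with the classes of a type I coloring yields a bijection between the $2^k$ type II colorings and those $2^k$ type I colorings $f$ satisfying $\{f(b_1),f(d_1)\}=\{f(b_{k+1}),f(d_{k+1})\}$ (when $k$ is even) or $\{f(b_0),f(d_0)\}=\{f(b_{k+1}),f(d_{k+1})\}$ (when $k$ is odd); each such $f$ shares precisely two classes with exactly one type II coloring. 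I would then glue all type II colorings together: two type II colorings sharing a class that meets every $D_i$ are equal, but two sharing a class with two vertices in alternate $4$-cycles may differ freely on those cycles, so such a class lies in $2^{\lfloor(k+1)/2\rfloor}$ (respectively $2^{\lfloor k/2\rfloor}$) type II colorings. Chaining along these classes connects all type II colorings, and hence also their type I mates, into a single component carrying $2^{k+1}$ colorings. The remaining $2^k$ type I colorings share a color class with no other coloring — not with another type I by the rigidity above, and not with a type II by the bijection — so each is its own component; since each contributes four color classes, these account for $2^{k+2}$ distinct vertices of $B(Q_k)$ split into $2^k$ isolated copies of $K_4$. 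Thus $B(Q_k)$ has $1+2^k$ components.

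For the parity statements I would invoke Theorem~\ref{thm: Tutte}: $Q_k$ has $n=4(k+2)$ vertices, and its only vertices of odd degree are $a_0,c_0,a_{k+1},c_{k+1}$, so $\deg(A)$ is even for a color class $A$ unless $A$ contains an odd number of these four vertices. Evaluating $J_A(f)=2|A|-\deg(A)+n-3$ modulo $2$ on one class of each component shows that the large component is odd while each singleton component is even; the claim about homology degree then follows from Theorem~\ref{thm: genusparity} with $g=0$ (since $n$ is even, $\deg(f)\equiv J_A(f)+1\pmod 2$). The argument for $Q_k'$ is identical, with the edge $b_{k+1}d_{k+1}$ playing the role of $a_{k+1}c_{k+1}$. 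The main obstacle is the middle block: keeping the case analysis for how type II color classes sit inside the type I colorings straight, and verifying that the alternate-cycle color classes really do glue the entire type II family into one connected piece rather than several.
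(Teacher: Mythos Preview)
Your proposal is correct and follows essentially the same route as the paper: the paper's argument (given in the discussion preceding the theorem statement) is organized around exactly your four blocks—the type~I/type~II split with the $2^{k+1}$ and $2^k$ extension counts, the rigidity of type~I color classes, the bijection between type~II colorings and the $2^k$ type~I colorings satisfying the $\{f(b_\bullet),f(d_\bullet)\}=\{f(b_{k+1}),f(d_{k+1})\}$ condition, and the parity computation via the four odd-degree vertices $a_0,c_0,a_{k+1},c_{k+1}$. Your self-identified obstacle (gluing all type~II colorings into one piece via the alternate-cycle classes) is precisely the step the paper also treats most lightly, asserting it is ``easy to see'' after recording the counts $2^{\lfloor (k+1)/2\rfloor}$ and $2^{\lfloor k/2\rfloor}$.
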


\section{Next Steps}

At present, we have not found 5-connected triangulations of the plane whose 4-coloring complexes would have arbitrarily many components.
However, we have found some 5-connected examples whose 4-coloring complexes have more than one component of the same parity.
The three smallest triangulations of this kind are illustrated in Figure \ref{fig: Three 5-Connected Triangulations on 20 Vertices}. These are triangulations of the plane on 20 vertices, and their minimality follows from an extensive computation that we performed on 5-connected triangulations with at most 23 vertices.

\begin{figure}[htp]
    \begin{minipage}{0.33\textwidth}
        \centering
        \includegraphics[width=0.8\textwidth]{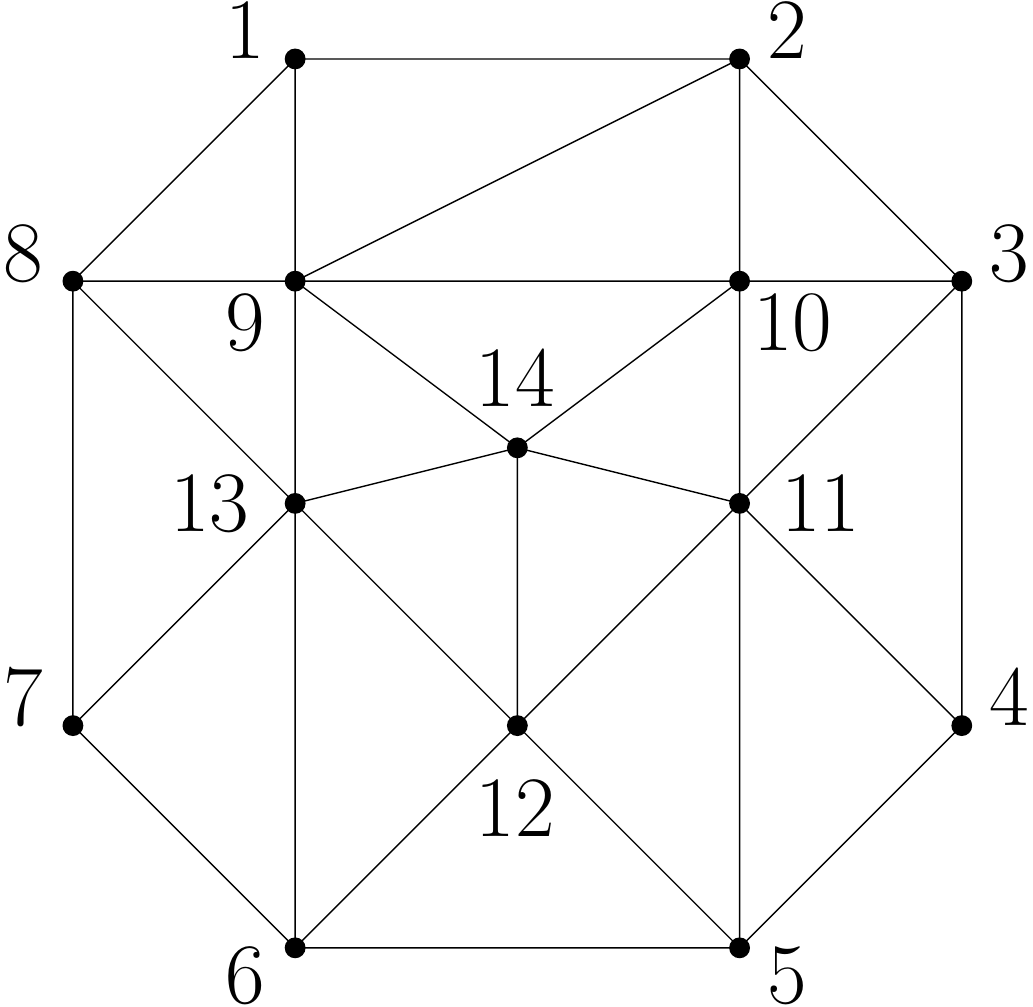}
    \end{minipage}%%
    {\hfill\color{black}\vrule\hfill}%
    \begin{minipage}{0.33\textwidth}
        \centering
        \includegraphics[width=0.8\textwidth]{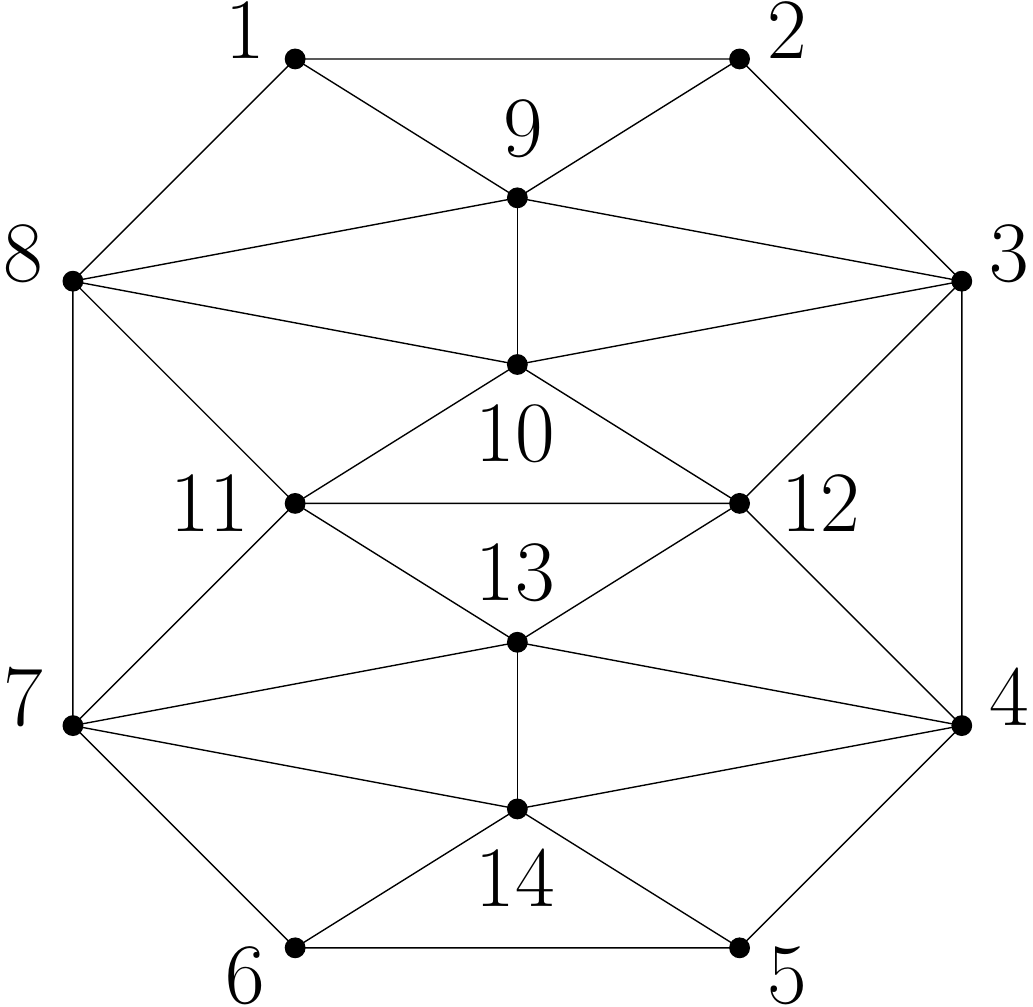}
    \end{minipage}%%
    {\hfill\color{black}\vrule\hfill}%
    \begin{minipage}{0.33\textwidth}
        \centering
        \includegraphics[width=0.8\textwidth]{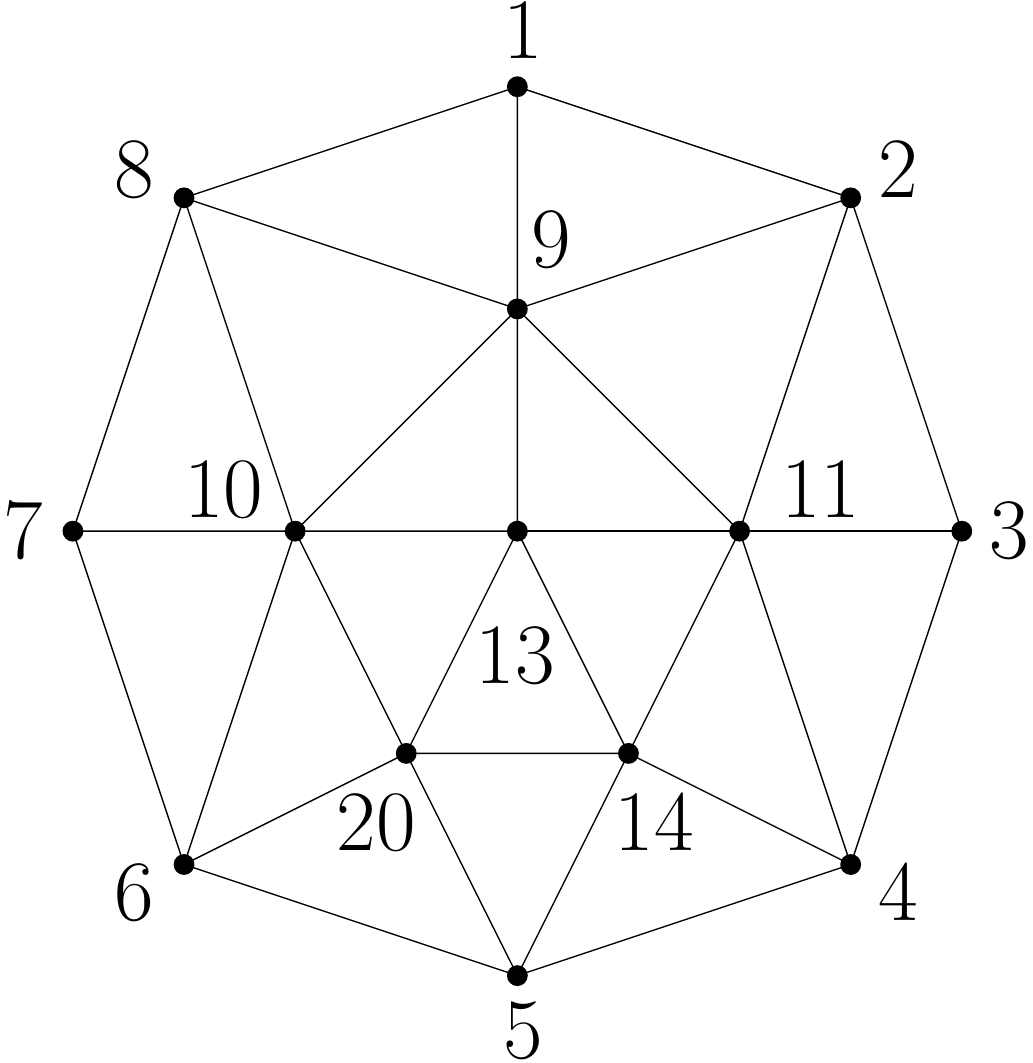}
    \end{minipage}%%

\medskip

    \begin{minipage}{0.33\textwidth}
        \centering
        \includegraphics[width=0.8\textwidth]{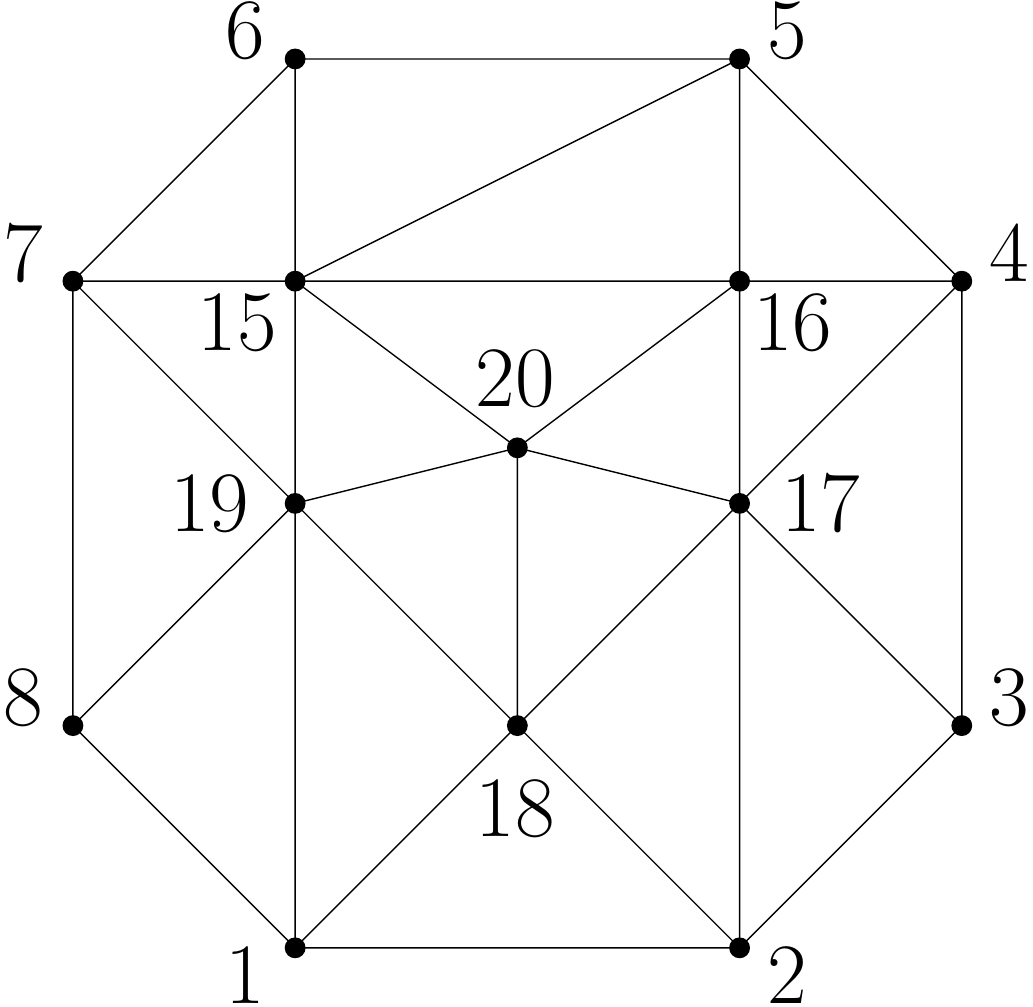}
    \end{minipage}%%
    {\hfill\color{black}\vrule\hfill}%
    \begin{minipage}{0.33\textwidth}
        \centering
        \includegraphics[width=0.8\textwidth]{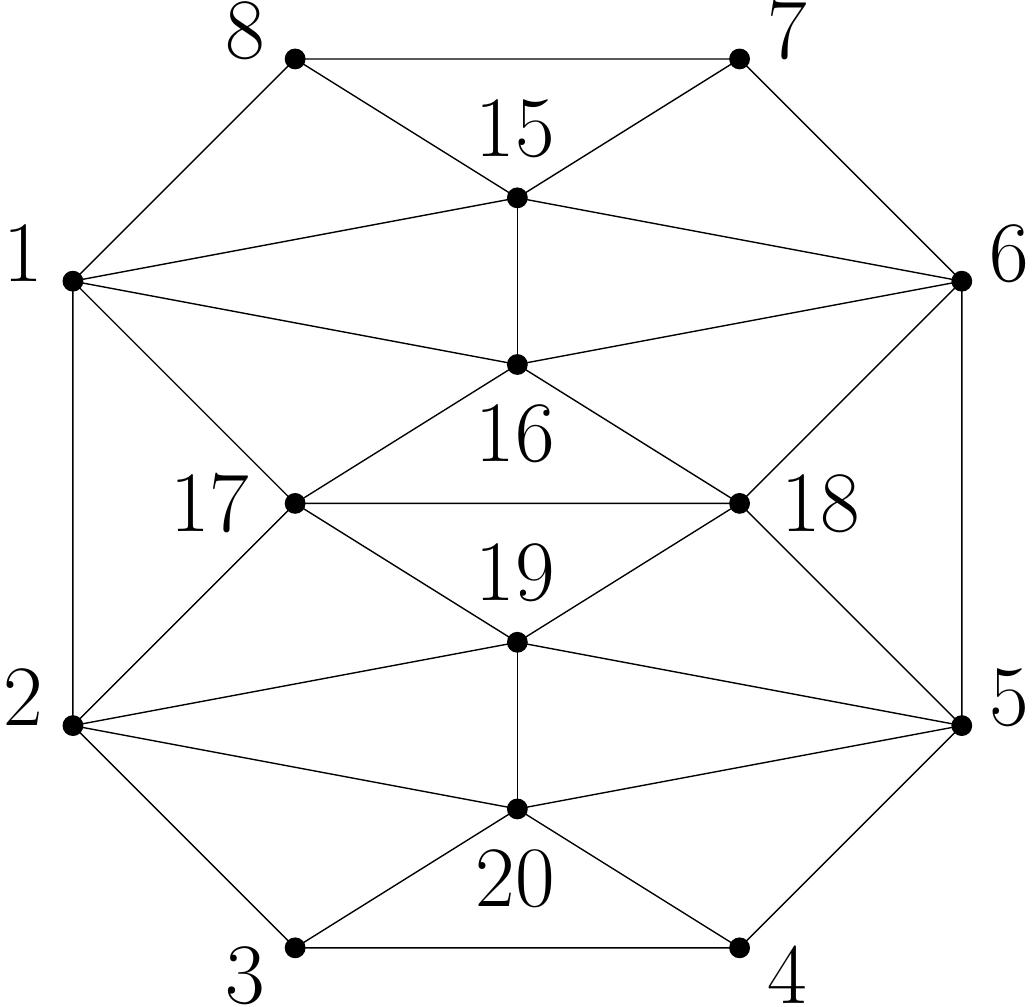}
    \end{minipage}%%
    {\hfill\color{black}\vrule\hfill}%
    \begin{minipage}{0.33\textwidth}
        \centering
        \includegraphics[width=0.8\textwidth]{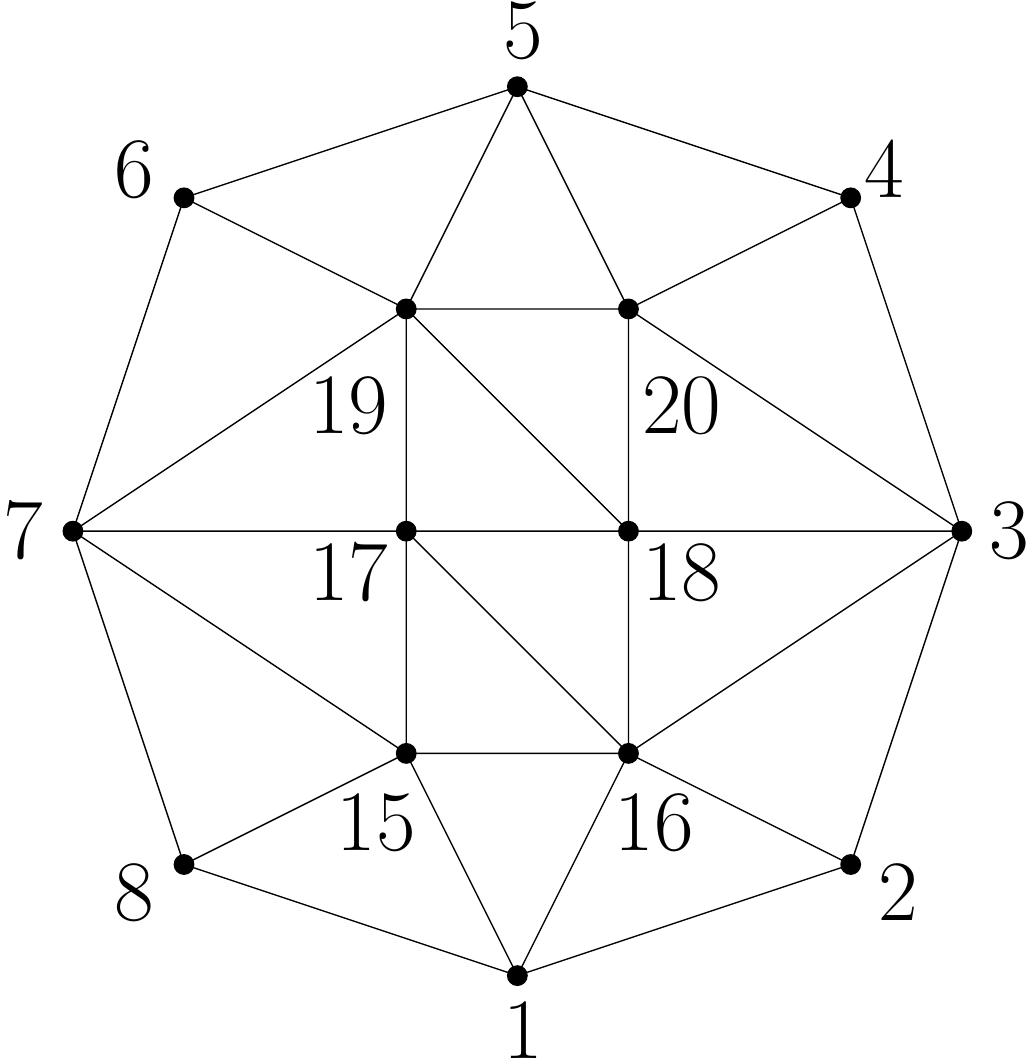}
    \end{minipage}%%
	\caption{Three 5-connected triangulations on 20 vertices. Each one is represented by two triangulated octagons. In order to obtain the full graph from each pair of subgraphs, identify the vertices on the outer cycle of each subgraph according to their labeling.}
	\label{fig: Three 5-Connected Triangulations on 20 Vertices}
\end{figure}

The 4-coloring complex of our first example has three components, two odd and one even.
The second example's 4-coloring complex has three even components and one odd component.
Our final example's 4-coloring complex has two even and one odd component.
There are also 6 non-isomorphic 5-connected examples on 21 vertices, 33 on 22 vertices and 66 on 23 vertices.
Unfortunately, we do not have an infinite family.

However, there is another interesting question about parity we can ask.
If the 4-coloring complex of a triangulation of the plane has at least two components, must it have one component of even parity and one component of odd parity?
Based on a large number of computations, we formulate the following conjecture.

\begin{conjecture}\label{conj:new}
Suppose that $T$ is a triangulation of the plane, and that its 4-coloring complex $B(T)$ has at least two components.  Then $B(T)$ has a component of even parity and a component of odd parity.
\end{conjecture}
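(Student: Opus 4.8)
\emph{Toward a proof of Conjecture~\ref{conj:new}.}
We first recast the statement. A component of $B(T)$ of parity $\varepsilon$ contains the $4$-cliques of some $4$-colorings, all of parity $\varepsilon$, and conversely every $4$-coloring lies, together with all of its colour classes, in a single component, of its own parity. Hence $B(T)$ has components of both parities if and only if $T$ has $4$-colorings of both parities, and Conjecture~\ref{conj:new} is equivalent to its contrapositive form: \emph{if all $4$-colorings of $T$ have the same parity, then $B(T)$ is connected.} We propose to prove this by strong induction on $|V(T)|$, the base case $T=K_4$ being immediate since $B(K_4)\cong K_4$ is connected.

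\emph{Reduction to $4$-connected triangulations.}
If $T$ has a separating triangle, then $T=G\bigtriangleup H$ with $|V(G)|,|V(H)|<|V(T)|$. By Lemma~\ref{lem: sum}(a), two $4$-colorings $f\bigtriangleup h$ and $f'\bigtriangleup h'$ lie in the same component of $B(G\bigtriangleup H)$ precisely when $f,f'$ lie in the same component of $B(G)$ and $h,h'$ lie in the same component of $B(H)$; thus the components of $B(G\bigtriangleup H)$ correspond bijectively to pairs consisting of a component of $B(G)$ and a component of $B(H)$, and by Lemma~\ref{lem: sum}(b) the parity of such a product component is the sum modulo $2$ of the parities of its two factors. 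Therefore, if $B(T)$ is disconnected, then $B(G)$ or $B(H)$ is disconnected, say $B(G)$; by the induction hypothesis $B(G)$ has a component of each parity, and pairing each of these with any fixed component of $B(H)$ produces components of $B(T)$ of each parity. So it remains to prove the conjecture when $T$ is $4$-connected.

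\emph{The $4$-connected case.}
Assume $T$ is $4$-connected and all of its $4$-colorings have the same parity. By Theorem~\ref{thm: genusparity} (with Euler genus $0$) this says exactly that the homology degrees $\deg(f)$ of all $4$-colorings $f$ are congruent modulo $2$. One is tempted to strengthen this to ``all $4$-colorings have the same homology degree'' and argue connectedness from there, but this strengthening already fails for $K_4$, whose realized degrees are $+1$ and $-1$ with $0$ omitted. Instead we propose to continue the induction using \emph{separating $4$-cycles}: if $T$ has a chordless separating $4$-cycle $C$, one cuts $T$ along $C$ into two triangulations with fewer vertices. Here the bookkeeping is more delicate than for $\bigtriangleup$, because $C$ may receive only two colors in a given $4$-coloring and must then be capped by inserting a vertex inside $C$ (as in the $\bigtriangleup$-construction) rather than by adding a chord; but one expects an analogous decomposition of $B(T)$ and additivity of parity via Theorem~\ref{thm: genusparity}. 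This would reduce the conjecture to triangulations having no separating triangle and no separating $4$-cycle, i.e. to the $5$-connected ones.

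\emph{The main obstacle.}
The heart of the difficulty is thus the $5$-connected case, where our only evidence is empirical: we have found no $5$-connected triangulation in which the monochromatic-parity hypothesis holds nontrivially, and the examples of monochromatic parity that we do know all carry large ``universal'' colour classes --- for instance, in a bipyramid over an odd cycle the two apices always receive the same color, so their union is a colour class of every $4$-coloring and hence a vertex of $B(T)$ adjacent to all others. Proving that some such mechanism operates in every $5$-connected triangulation with monochromatic parity --- or, failing that, classifying these triangulations outright and checking connectedness of their coloring complexes directly --- is, I expect, the principal obstacle to a complete proof.
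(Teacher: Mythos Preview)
The statement is Conjecture~\ref{conj:new}, which the paper leaves \emph{open}, supported only by computation; there is no proof in the paper to compare against.

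Your contrapositive reformulation and the reduction to $4$-connected triangulations via Lemma~\ref{lem: sum} are both correct: components of $B(G\bigtriangleup H)$ biject with pairs of components of the factors, parity is additive by part~(b), and the induction goes through across any separating triangle. The further reduction along separating $4$-cycles is only sketched; the two-colour versus four-colour dichotomy on the cycle would need to be worked out carefully, and you do not do so. The genuine gap is exactly where you locate it: the $5$-connected base case, for which you offer only empirical evidence and a heuristic about universal colour classes. (Your bipyramid example, incidentally, has cycle vertices of degree~$4$ and is therefore not $5$-connected, so it does not exhibit the proposed mechanism in the regime where you actually need it.) The paper's own stance --- posing a conjecture after exhaustive search on $5$-connected triangulations through $23$ vertices --- confirms that this case is genuinely open. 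What you have written is a reasonable strategy outline, not a proof.
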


\section*{References}

\bibliographystyle{plain}
\bibliography{references}

\end{document}